\newtheorem{theorem}{Theorem}
\newtheorem{lemma}{Lemma}
\begin{document}
\author{Dwight Nwaigwe}
\title{On the Convergence of WKB Approximations of the Damped Mathieu Equation}
\date{}
\maketitle

\section*{\centering {Abstract}}
Consider the differential equation  ${ m\ddot{x} +\gamma \dot{x} -x\epsilon \cos(\omega t) =0}$, $0 \leq t \leq T$. One application of it is the modelling of ions in quadrupole traps. The form of the fundamental set of solutions, $P_1(t)e^{\lambda_1 t} ,P_2(t)e^{\lambda_2 t} $ are determined by Floquet theory. In the limit as $m \to 0$ we can apply WKB theory to get first order approximations of this fundamental set: $x_{1,WKB}(t),x_{2,WKB}(t)$.  WKB theory states that this approximation gets better as $m \to 0$ in the sense that $\| P_i(t)e^{\lambda_i t}-x_{i,WKB}(t)\|_{\infty}$ is bounded as function of $m$ for a given $T$, where $x_{i,WKB}(t)$ is a fundamental solution obtained from WKB analysis. However, convergence of the component functions $e^{\lambda_i}$ and $P_i(t)$ are not addressed. We show that $\lambda_i$ and $P_i(t)$ converge to that predicted by WKB theory. We also provide a rate of convergence that is not dependent on $T$.

\section*{\centering {Introduction}}
Consider the damped Mathieu's equation,

\begin{equation}
\label{dampedmathieu}
{ m\ddot{x} +\gamma \dot{x} -x\epsilon \cos(\omega t) =0}
\end{equation} 

where $m, \gamma >0$.  This equation is especially used to model the dynamics of a particle in a quadrupole trap \cite{me}. In such traps, $m$ represents mass, $\gamma$  damping, and $\epsilon$ represents an effective charge of the particle. Matheiu's equation does not have an analytic solution. For small $m$, one may use asymptotic techniques to approximate it, among them the WKB method. For an introduction to WKB one may consult \cite{bo}.  It has been shown by \cite{olver} that the WKB method produces an approximate fundamental set of solutions.  However, the error bound given by \cite{olver} only pertains to each linearly independent solution as whole, not to the components $P_i(t), \lambda_i$ of the Floquet solution $P_i(t)e^{\lambda_i t}$. It may be important to know how well the WKB solution, if at all, approximates these components.  To illustrate, let us start with \ref{dampedmathieu} and use the Liouville transformation  $x(t)=v(t)e^{-\frac{\gamma t}{2m}}$ to arrive at $ m^2 \ddot{v} - \left(\frac{\gamma^2}{4}  + m \epsilon  \cos(\omega t)\right) v=0$. Now that the damped term is gone, this equation is in the appropriate form to use the WKB method. According to \cite{olver}, for an equation of the form $\ddot{v}=u^2f(t)v$ on the interval $[0, T]$, one may get first order approximations of the two linearly independent solutions $v_1(t), v_2(t)$ by

\begin{align}
\label{olverform}
v_1(t) & = \frac{ C_1}{ f(t)^{1/4} } e^{  u  \int_{0}^{T} { \sqrt{f(s)}ds  } } (1+\epsilon_1) \\
v_2(t) & = \frac{ C_2}{ f(t)^{1/4} } e^{ - u  \int_{0}^{T} { \sqrt{f(s)}ds  } } (1+\epsilon_2) \\
|(\epsilon_i)| & \leq \frac{e^{F_j(u,t)}}{2u}-1 \\
F_1(u,t) & =\int_{0}^{t} \left(\frac{1}{f(t)}\right)^{1/4} \left|\frac{d^2}{dt^2} \left(\frac{1}{f(t)}\right)^{1/4}\right| dt  \\
F_2(u,t) & =\int_{t}^{T} \left(\frac{1}{f(t)}\right)^{1/4} \left|\frac{d^2}{dt^2} \left(\frac{1}{f(t)}\right)^{1/4}\right| dt.
\end{align}

The solution breaks down when $t>>T$ or when $t$ is such that $f(t) \to 0$. For our case, this is not a problem since for sufficiently small $m$, $ \frac{\gamma^2}{4} > m \epsilon \cos(\omega t)$. If we plug in our parameters into \ref{olverform} and then recall the Liouville transformation, we get

\begin{equation}
\begin{split}
x(t)= \frac{ C_1}{ \left(  \frac{\gamma^2}{4} + m \epsilon \cos(\omega t)   \right)^{1/4} } e^{  \frac{1}{m}  \int_{0}^{t} { \sqrt{ \frac{\gamma^2}{4} + m \epsilon \cos(\omega s)}ds - \frac{\gamma t}{2m}  } } \ + \\
\frac{ C_2}{ \left(  \frac{\gamma^2}{4} + m \epsilon \cos(\omega t)  \right )^{1/4} } e^{ - \frac{1}{m}  \int_{0}^{t} { \sqrt{ \frac{\gamma^2}{4} + m \epsilon \cos(\omega s)}ds - \frac{\gamma t}{2m}  } }.
\end{split}
\end{equation}

A Taylor expansion shows  $\sqrt{ \frac{\gamma^2}{4} + m \epsilon \cos(\omega s)} \approx \frac{\gamma}{2} + \frac{m \epsilon \cos(\omega s)}{\gamma} - \frac{m^2 \epsilon^2 {\cos(\omega s)}^2}{\gamma^3} + O(m^3)$. 
From this we get, $\frac{1}{m}  \int_{t_0}^{t} { \sqrt{ \frac{\gamma^2}{4} + m \epsilon \cos(\omega s)}ds  \approx  - \frac{\gamma t}{2m}  }+  \frac{\epsilon sin(\omega t)}{\gamma \omega} -\frac{m \epsilon^2}{2 \gamma^3} \left(t + \frac{\sin(2 \omega t)}{2 \omega}   \right) $. This analysis suggests that for sufficiently small $m$ the characteristic exponents of the Floquet solutions are approximated by $- \frac{\epsilon^2m}{2 \gamma^3}$ and $ - \frac{\gamma}{m}$ while the periodic parts are approximated by $e^{\frac{\sin(\omega t)}{\gamma \omega} }$ and  $e^{-\frac{\sin(\omega t)}{\gamma \omega} }$ respectively. We note that it is possible to have two sequences of functions ${f_m(t)}$ and ${g_m(t)}$ such that $f_m(t) g_m(t)$ converge (in the sense of \cite{olver})  to the  solution of \ref{dampedmathieu} without having $f_m(t)$ and $g_m(t)$ converge to the periodic or exponential parts. Part of the goal of this paper is to rule this out.
In this article, we will show that $\lambda_i$ and $P_i(t)$ can be approximated by that suggested by WKB theory, specifically that the asymptotic error is $O(m^2)$ for $\lambda_i$ and $O(m)$ for $P_i(t)$. We note that these these bounds are independent of $T$.

\section*{\centering {Characteristic Exponents}}

Floquet theory states that the solutions of \ref{dampedmathieu} will contain exponential terms. In this section we find an approximation of the characteristic exponents of \ref{dampedmathieu} as $m \to 0$. Specifically, we show that for small $m$, the characteristic exponents are  $-\frac{m \epsilon^2 }{\gamma^3} +O(m^2)$ and $-\frac{\gamma}{m}+O(m^2)$. The approach used here is the analysis of the infinite Hill determinant. For a quick and thorough exposition one may consult \cite{magwink} \cite{richards}, \cite{ggk}. To avoid confusion, we note that in literature, ``Hill's method" or ``infinite determinate analysis" refers to any technique that involves writing an operator in terms of Fourier series and requiring that the determinant of the resultant matrix is nonzero.  Our context will be clear.

Of the articles that make use of the infinite determinant method in some setting, the author is unaware of any that compute the leading order asymptotics.  Instead, truncation of the infinite matrix is done (to typically 3 by 3) and then an approximation of the determinant is made from this truncation which allows one to get an expression that approximates the characteristic exponent. An example of this can bee seen in \cite{stepan}. The reason for truncation is due to the fact that the determinant is ``infinite", thus requiring one to know what the series (which may be difficult to evaluate) converges to. Depending on the application, truncating the determinant may introduce significant error into calculations involving it. For instance, it will be shown that truncating the determinant introduced here results in the quantity $1-O(m^2)$, whereas considering all terms in the infinite expansion yields $1-O(m)$.  For $m \to 0$, this is a big magnitude difference which can affect computations reliant on the determinant. In \cite{me}, the authors take a logarithm of the determinant and multiply it by an integral. Clearly, if the authors were to use $1-O(m^2)$ instead of $1-O(m)$, this would cause an extra multiplication by $m$ which would drastically change the magnitude and nature of the computed values. 
Consider the Liouville transformation $x(t)=v(t)e^{-\frac{\gamma t}{2m}}$, which transforms \ref{dampedmathieu} into

\begin{equation}
\label{exp1}
	{\ddot{v} - \left( \left(\frac{\gamma}{2m}\right)^2  + \frac{\epsilon}{m} \cos(\omega t) \right)v=0 }.
\end{equation}

Equation \ref{exp1} is of the form 

\begin{equation}
\label{exp2}
	\ddot{v}+ v\sum_{n= -\infty}^{\infty}   \mathbf{ G_n }  e^{in\omega t}=0.
\end{equation}

It is straightforward to see that $\mathbf {G_{0}}=-(\frac{\gamma}{2m})^2, \mathbf {G_{1}} = \mathbf{ G_{-1}}= -\epsilon /2m , \mathbf {G_{i}}=0$ otherwise.  According to Floquet theory, \ref{exp2} has at least one solution of the form $v(t)=e^{\mu t}P(t)$, where $P(t)$ is periodic and of the form $P(t)= \sum_{r= -\infty}^{\infty}   \mathbf{ C_r }  e^{ir  \omega t}$. Substituting for $v(t)$, \ref{exp2} then becomes 

\begin{equation}
\label{exp3}
	(\mu+ir \omega)^2 \mathbf{C_r} + \sum_{n= -\infty}^{\infty}   \mathbf{ G_n } \mathbf{ C_{r-n}}=0
\end{equation}

which is equivalent to

\begin{equation}
\label{eqhillform}
	\mathbf{C_r}+  \sum_{n= -\infty, n\neq 0}^{\infty}   \mathbf{ G_n } \rho_r^{-1}(\mu) \mathbf{ C_{r-n}}=0
\end{equation}
where $\rho_r(\mu) =(\mu + ir \omega)^2 + \mathbf{G_0}$.

The method of infinite Hill determinant states that a nontrivial solution of \ref{eqhillform} exists if the determinant $\Delta(\mu)$ of the infinite system is 0. 

%

\begin{table}[H]
	\[
	\Delta(\mu)=
	\begin{vmatrix}
	& \vdots & \vdots & \vdots & \vdots &\vdots \\[6pt]
	\cdots & 1 & \frac{G_1}{  \rho_{ \scaleto{-2}{4pt} }(\mu)     } & 0 & 0 & 0 & \cdots\\[6pt]
	\cdots & \frac{G_{1}}{\rho_{ \scaleto{-1}{4pt}}(\mu)} & 1 & \frac{G_1}{\rho_{ \scaleto{-1}{4pt}}(\mu)} & 0 &0& \cdots\\[6pt]
	\cdots & 0 &  \frac{G_{1}}{\rho_{ \scaleto{0}{4pt}}(\mu)} & 1 &  \frac{G_1}{\rho_{ \scaleto{0}{4pt}}(\mu)} & 0 & \cdots\\[6pt]
	\cdots & 0 &  0 & \frac{G_{1}}{\rho_{ \scaleto{1}{4pt}}(\mu)} & 1 & \ \ \ \ \frac{G_{1}}{\rho_{ \scaleto{1}{4pt}}(\mu)} \cdots\\[6pt]
	\cdots & 0 &  0 & 0 &   \frac{G_{-1}}{\rho_{ \scaleto{2}{4pt}}(\mu)} & 1 & \cdots\\[6pt]
	& \vdots & \vdots & \vdots & \vdots & \vdots
	\end{vmatrix}=0, \rho_r \neq 0
	\]
	\caption{Coefficient matrix of equation \ref{eqhillform}.}
\end{table}

Now using the complex analysis argument referred to in \cite{ggk},\cite{magwink}, or \cite{richards} existence of a nontrivial solution implies that

\begin{equation}
	\cos(i 2 \pi \mu / \omega)=1-2 \Delta(0) sin^2{\pi \sqrt{ \mathbf{G_0}} / \omega }.
\end{equation}

\begin{figure}
\end{figure}

Since $\mathbf {G_{i}}$ and $\rho_{r}(0)$ are real, so is  $\Delta(0)$. Let $\mu=c + id$. Then inserting this into the previous equation and separating real and imaginary parts gives

\begin{equation}
	\frac{e^{2\pi c / \omega}  + e^{-2\pi c / \omega}} {2} \cos(2 \pi d / \omega)= \Re \{  1-2\Delta(0)\sin(\pi \sqrt\mathbf {G_{0}} / \omega)^2 \} 
\end{equation}

\begin{equation}
	\frac{e^{2\pi c / \omega}  - e^{-2\pi c / \omega}}{2} \sin(2 \pi d / \omega)= \Im \{ 1-2\Delta(0)\sin(\pi \sqrt{\mathbf {G_{0}}} / \omega)^2 \}.
\end{equation}

\vskip .6cm
But since $\Delta(0)$ and  $\sin(\pi \sqrt{\mathbf {G_{0}}} / \omega)^2=(-e^{- \pi \gamma / \omega m} + 2 - e^{ \pi \gamma / \omega m})/4$ are real,  ${\Im \{ 1-2\sin(\pi \sqrt{\mathbf {G_{0}}} / \omega)^2 \}}$ is 0, and it follows that $\sin(2 \pi d / \omega)=0$, implying  $d=n \omega /2$ where $n$ is an integer.  We note that it is not possible that $c=0$ since equation 13 will have no solution for small $m$. In addition, if $c=0$, there is no consistency with the WKB method. Thus, $\cos(2 \pi d / \omega)= \pm 1$, and equation 13 becomes

\begin{equation}
	\pm \frac{e^{2\pi c / \omega}  + e^{-2\pi c / \omega}} {2}  =  1-2\Delta(0) \left(\frac{-e^{- \pi \gamma / \omega m} + 2 - e^{ \pi \gamma / \omega m} }{4} \right).
\end{equation}

Clearly, a negative sign in the left hand side of equation 15 makes the equation unsolvable when $m$ is sufficiently small, so we take a positive sign. We rewrite equation 15 as

\begin{equation}
	\cosh(2\pi c / \omega)=1-2\Delta(0)+\Delta(0)\cosh(\pi \gamma / \omega m)
\end{equation}

Noting that for small $m$ we have $\Delta(0) \approx 1$, and that $\cosh(\pi \gamma / \omega m)$ is large, we may write 
\begin{equation}
	c=\pm \frac{\omega}{2\pi}\cosh^{-1}(\Delta(0)\cosh(\pi \gamma / \omega m) ) +err.
\end{equation}

where $err$ denotes the resultant exponentially decaying error. From now on, $err$ refers to any exponentially decaying error. Using $\cosh^{-1}(x)=\log(x+\sqrt{x^2-1})$, a Taylor expansion results in

\begin{equation}
	c=\pm \frac{\omega}{2\pi} \{\log(2\Delta(0)\cosh(\pi \gamma / \omega m)-\frac{1}{2\Delta(0)\cosh(\pi \gamma / \omega m)} +err)\}.
\end{equation}

We further re-write and use another Taylor expansion to get
\begin{equation}
	\begin{split}
		c=\pm \frac{\omega}{2\pi}\{\log(\Delta(0)) + \log(e^ {\pi \gamma / \omega m}  +e^{-\pi \gamma / \omega m})   +err\} &= \\ \pm \frac{\omega}{2\pi}\{\log(\Delta(0)) + \pi \gamma / \omega m + err\} &=  \\ \pm \frac{\omega}{2\pi} \log(\Delta(0)) + \gamma /2m + err.
	\end{split}
\end{equation}

Keeping in mind the Liouville transformation, it follows as $m \to 0$, the larger and smaller characteristic exponents, are respectively,
\begin{align}
	\label{expmax}
	\lambda_{max} & = \Re \{ \mu  -\frac{\gamma  }{2m}  \}_{max} =
	\{ c_{max}  -\frac{\gamma  }{2m}  \} =  \frac{\omega}{2\pi}\{\log(\Delta(0)) +err \}    
	\\
	\label{expmin}
	\lambda_{min} & = \Re \{ \mu  -\frac{\gamma  }{2m}  \}_{min} =
	\{ c_{min}  -\frac{\gamma  }{2m}  \} =- \frac{\gamma}{m}- \frac{\omega}{2\pi}\{\log(\Delta(0)) +err \}.    
\end{align}

\begin{lemma}
$\Delta(0)=1-\frac{m \pi \epsilon^2}{\gamma^3 \omega}+O(m^2)$
\end{lemma}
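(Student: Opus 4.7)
The plan is to view the Hill matrix as $I+B$, where $B$ is the tridiagonal operator with $B_{rr}=0$ and $B_{r,r\pm 1}=G_1/\rho_r(0)$, and expand
\[
\Delta(0)=\det(I+B)=1-\tfrac{1}{2}\operatorname{tr}(B^2)+\tfrac{1}{8}\bigl(\operatorname{tr}(B^2)\bigr)^2-\tfrac{1}{4}\operatorname{tr}(B^4)+\cdots
\]
via the identity $\log\det(I+B)=\operatorname{tr}\log(I+B)$. Two simplifications are immediate. First, $\operatorname{tr}(B)=0$ since the diagonal vanishes. Second, because $B$ is supported on the edges of a path graph (indices $r$ and $r\pm 1$ only), every closed walk has even length and $\operatorname{tr}(B^{2k+1})=0$ for all $k\ge 0$. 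Convergence of the Fredholm expansion and of the determinant itself follows from the decay $G_1/\rho_r(0)=O(1/r^2)$ as $|r|\to\infty$, which puts $B$ in the trace class.

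The key computation is the exact evaluation of
\[
\operatorname{tr}(B^2)\,=\,2G_1^2\sum_{r\in\mathbb{Z}}\frac{1}{\rho_r(0)\,\rho_{r+1}(0)}.
\]
Writing $\rho_r(0)=-\omega^2(r^2+a^2)$ with $a=\gamma/(2m\omega)$, I would perform a partial-fraction decomposition of $1/[(r-ia)(r+ia)(r+1-ia)(r+1+ia)]$, pair the $(r-ia,\,r+1+ia)$ and $(r+ia,\,r+1-ia)$ terms so that each pair is absolutely summable, and apply the Mittag--Leffler identity $\sum_{r\in\mathbb{Z}}1/(r-z)=-\pi\cot(\pi z)$. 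A short calculation then yields
\[
\sum_{r\in\mathbb{Z}}\frac{1}{\rho_r(0)\,\rho_{r+1}(0)}\,=\,\frac{2\pi\coth(\pi a)}{\omega^4\,a\,(1+4a^2)}.
\]
Substituting $a=\gamma/(2m\omega)$, using $\coth(\pi a)=1+O(e^{-\pi\gamma/(m\omega)})$, $(1+4a^2)^{-1}=(4a^2)^{-1}(1+O(m^2))$, and $G_1^2=\epsilon^2/(4m^2)$, this gives
\[
\operatorname{tr}(B^2)\,=\,\frac{2\pi m\,\epsilon^2}{\gamma^3\omega}+O(m^3).
\]

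The last step is to bound the tail of the Fredholm expansion by $O(m^2)$. The cross term $\tfrac{1}{8}(\operatorname{tr}(B^2))^2$ is already $O(m^2)$. For $\operatorname{tr}(B^4)$ I would enumerate the six closed length-four walks based at a fixed row $r$ (two back-and-forth walks through each neighbor, two reaching $r\pm 2$, and two that mix sides) and bound each resulting $r$-sum by the same cotangent/residue technique used for $\operatorname{tr}(B^2)$, obtaining $\operatorname{tr}(B^4)=O(m^3)$; an analogous path-counting estimate should give $\operatorname{tr}(B^{2k})=O(m^{2k-1})$ for $k\ge 2$. Combining these,
\[
\Delta(0)=1-\tfrac{1}{2}\operatorname{tr}(B^2)+O(m^2)=1-\frac{m\pi\epsilon^2}{\gamma^3\omega}+O(m^2).
\]
The step I expect to be most delicate is the uniform $O(m^{2k-1})$ bound on $\operatorname{tr}(B^{2k})$ for all $k$, together with the rigorous justification of manipulating the infinite-dimensional determinant --- precisely the issue the introduction flags, where naive truncation of the Hill determinant produces the spurious estimate $1-O(m^2)$ instead of the correct $1-O(m)$.
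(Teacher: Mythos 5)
Your proposal is correct, and it reaches the lemma by a genuinely different route than the paper. The paper works directly with the truncated determinants $M_{2n+1}$: it derives, by induction on row reduction, the factorization $\det(M_{2n+1})=(f_{2n-1}-f_{2n-3}c_nc_{n-1})\prod_{i=1}^{n-1}(1-c_ic_{i+1})$, reads off that the first-order content of $\Delta(0)$ is $1-S$ with $S=2\sum_{n\ge0}c_nc_{n+1}$, and then pins down $S$ by a squeeze between $2\sum_n c_n^2$ and $2\sum_n c_{n+1}^2$, each evaluated from a tabulated closed form for $\sum_n\bigl((n^2+a)^2+z^2\bigr)^{-1}$ as $z\to0$; the higher-order products are then bounded by powers of $S/2$. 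You instead treat the Hill matrix as a trace-class perturbation $I+B$ and use $\log\det(I+B)=\operatorname{tr}\log(I+B)$, with the leading term $\tfrac12\operatorname{tr}(B^2)=\sum_{r\in\mathbb{Z}}c_rc_{r+1}$ --- the same key quantity $S$ --- evaluated exactly by the cotangent/residue identity rather than squeezed, and with the tail controlled by path-counting bounds $\operatorname{tr}(B^{2k})=O(m^{2k-1})$. I checked your closed form $\sum_{r\in\mathbb{Z}}\rho_r(0)^{-1}\rho_{r+1}(0)^{-1}=2\pi\coth(\pi a)/\bigl(\omega^4a(1+4a^2)\bigr)$ with $a=\gamma/(2m\omega)$ against the residue calculation and against the paper's asymptotics; both give $S=\frac{m\pi\epsilon^2}{\gamma^3\omega}+O(m^2)$ (indeed your route shows the correction to the leading term of $\operatorname{tr}(B^2)$ is $O(m^3)$, slightly sharper than the squeeze needs). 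What your approach buys is a cleaner and more systematic treatment: the convergence of the infinite determinant is handled once and for all by trace-class theory (tridiagonal with $\ell^1$ off-diagonals, $\lVert B\rVert=O(m)$ so the log series converges), the leading coefficient comes from one exact summation, and the higher corrections are organized by $\operatorname{tr}(B^{2k})$ with an explicit combinatorial bound of the form $\binom{2k}{k}(Cm)^{2k-2}\cdot O(m)$, whose sum over $k\ge2$ is $O(m^3)$ --- this closes the uniformity issue you flag yourself, so it is not a real gap. What the paper's approach buys is elementarity (no operator theory) and an explicit view of the truncated determinants, which the author needs in order to make the point that any fixed truncation yields $1-O(m^2)$ while the full determinant is $1-O(m)$; your expansion recovers the same moral, since the $O(m)$ term $\operatorname{tr}(B^2)$ only emerges from the full sum over $r\in\mathbb{Z}$, each individual closed walk contributing $O(m^2)$.
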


\begin{proof}
Let $M_{2n+1}$ be the centered, truncated $2n+1$  by $2n+1$ matrix of $\Delta(0)$. Then by induction (via row reduction) one may calculate

\begin{equation}
\label{eqdet}
\det(M_{2n+1})=(f_{2n-1}- f_{2n-3}c_n c_{n-1}) \prod_{i=1}^{n-1}(1-c_{i}c_{i+1}), \ \ n > 3
\end{equation}

\begin{equation}
\label{eqf}
f_{2n-1}:= \frac{\det(M_{2n-1}) }{\prod_{i=1}^{n-1}(1-c_{i}c_{i+1})} \ \ n > 3
\end{equation}

\begin{align}
\det(M_{3})=1-2C_0C_1 \\
\det(M_{5})=(1-C_1C_2)[(1-2C_0C_1)-C_1C_2(1-C_0C_1) ].
\end{align}

From \ref{eqf} it can be seen that $\Delta(0)$ contains the term $1-2\sum_{n=0}^{\infty} c_n c_{n+1}, \ \text{where}  \ c_n=\frac{G_1}{\rho_{n }(0) }$. Let  
	
	\begin{equation}
	S=2\sum_{n=0}^{\infty} c_n c_{n+1}, \ c_n=\frac{G_1}{  \rho_{n }(0)     }=  \frac{\epsilon/2m}{(n\omega)^2+(\gamma/2m)^2}.
	\end{equation}
	
	We have 
	
	\begin{align}
	S & < 2\sum_{n=0}^{\infty} c^2_n = \sum_{n=0}^{\infty} \frac{\epsilon^2/2}{m^2 n^4 \omega^4 +   n^2 \omega^2 \gamma^2/2 +\gamma^4/m^4 } \\
	& =\frac{\epsilon^2}{2m^2 \omega^4} \sum_{n=0}^{\infty} \frac{1}{n^4 +   \frac{n^2 \gamma^2}{2 m^2 \omega^2} + \frac{\gamma^4}{16 m^4 \omega^4}} \\
	&= \frac{\epsilon^2}{2m^2 \omega^4} \sum_{n=0}^{\infty} \frac{1}{\left(n^2+ \frac{\gamma^2}{4 m^2 \omega^2} \right)^2}.
	\end{align}
	
	It is known \cite{listfuncs} that a series of the form $\sum_{n=0}^{\infty} \frac{1}{\left(n^2+ a \right)^2 +z^2}$ is equal to

\begin{equation}
\frac{i \pi}{4z} \left( \frac{\coth \pi \sqrt{a+iz}}{\sqrt{a+iz}} -  \frac{\coth \pi \sqrt{a-iz}}{\sqrt{a-iz}}  \right) -\frac{1}{2(a^2+z^2)}.
\end{equation}

In light of this, we evaluate the above expression for $a=\frac{\gamma^2}{4m^2 \omega^2}$ as $z \to 0$. The $\coth$ term results $\frac{2 \pi m^3 \omega^3}{\gamma^3}$ while the other term gives $O(m^4)$. Combining and multiplying by $\frac{\epsilon^2}{2m^2 \omega^4}$ we get

\begin{equation}
S < \frac{m \pi \epsilon^2}{\gamma^3 \omega}+O(m^2).
\end{equation}

We now bound $S$ from below. Write

\begin{align}
S & > 2\sum_{n=0}^{\infty} c_{n+1}^2 \\
&= \sum_{n=0}^{\infty} \frac{\epsilon^2/2}{m^2 (n+1)^4 \omega^4 +   (n+1)^2 \omega^2 \gamma^2/2 +\gamma^4/m^4 } \\
& =\frac{\epsilon^2}{2m^2 \omega^4} \sum_{n=0}^{\infty} \frac{1}{(n+1)^4 +   \frac{n^2 \gamma^2}{2 m^2 \omega^2} + \frac{\gamma^4}{16 m^4 \omega^4}} \\
&= \frac{\epsilon^2}{2m^2 \omega^4} \sum_{n=0}^{\infty} \frac{1}{\left((n+1)^2+ \frac{\gamma^2}{4 m^2 \omega^2} \right)^2} \\
&= \frac{\epsilon^2}{2m^2 \omega^4}  \left( \sum_{n=1}^{\infty} \frac{1}{\left(n^2+ \frac{\gamma^2}{4 m^2 \omega^2} \right)^2} + O(m^4) \right).
\end{align}

Using the known expression for the series as stated earlier, we get

\begin{equation}
S > \frac{m \pi \epsilon^2}{\gamma^3 \omega}+O(m^2).
\end{equation}

By the squeeze theorem, we conclude that as $m \to 0, S \to \frac{m \pi \epsilon^2}{\gamma^3 \omega}+O(m^2)$. Note that $\det M_{2n+1}$ has a leading order $m^2$ term but that $\lim_{n \to \infty} \det M_{2n+1}=\Delta(0)$ has a leading order $m$ term. This illustrates how matrix truncation affects the solution to \ref{dampedmathieu}. 

Since we now know that the infinite sum of $O(m^2)$ terms converge to as a function of $m$, we now study what the infinite sums of $O(m^4)$ terms converge to. Consider the expression $\prod_{i=1}^{n}(1-c_{i-1}c_i)$ on the right hand side of \ref{eqdet}. The sum of $O(m^{2p})$ terms that come from it are less than $\left(\sum_{i=0}^n c_ic_{i+1} \right)^p<\left( \frac{S}{2}\right)^p=O(m^p)$. A similar analysis holds for $(f_{2n-1}- f_{2n-3}c_n c_{n-1})$. Letting $p=2$, we find that the infinite sum of $O(m^4)$ terms for any $M_{2n+1}$ is $O(m^2)$. As a result we have 

\end{proof}

\begin{theorem}
	The characteristic exponents of \ref{dampedmathieu} are asymptotically equal to that suggested by a first order WKB approximation, i.e, $\lambda_{max}= -\frac{m\epsilon^2}{ 2\gamma^3}+O(m^2)$ and $\lambda_{min}= -\frac{\gamma}{m}+O(m^2)$.
\end{theorem}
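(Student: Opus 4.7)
The plan is to apply the Lemma directly to equations (20) and (21), which already express $\lambda_{\max}$ and $\lambda_{\min}$ as explicit linear combinations of $\log(\Delta(0))$, the deterministic term $-\gamma/m$, and an exponentially decaying error $err$. Since the Lemma establishes that $\Delta(0)$ sits in a neighborhood of $1$ for small $m$, only a single Taylor expansion of the logarithm is required to finish.

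First I would write $\Delta(0)=1+u$ with $u=-\frac{m\pi\epsilon^2}{\gamma^3\omega}+O(m^2)$, and expand $\log(1+u)=u-u^2/2+\cdots$. Because $u=O(m)$, the quadratic remainder is $O(m^2)$, and combining with the $O(m^2)$ correction already present inside $\Delta(0)$ gives
\[
\log(\Delta(0)) = -\frac{m\pi\epsilon^2}{\gamma^3\omega}+O(m^2).
\]
This is the only piece of nontrivial analysis needed beyond the Lemma, and it is essentially a one-line Taylor estimate.

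Substituting into (20) yields
\[
\lambda_{\max}=\frac{\omega}{2\pi}\bigl(\log(\Delta(0))+err\bigr)=-\frac{m\epsilon^2}{2\gamma^3}+O(m^2),
\]
where the $err$ term, being exponentially small in $1/m$, is dominated by any positive power of $m$ and so is absorbed into the $O(m^2)$ remainder. Substituting into (21) gives
\[
\lambda_{\min}=-\frac{\gamma}{m}-\frac{\omega}{2\pi}\bigl(\log(\Delta(0))+err\bigr)=-\frac{\gamma}{m}+\frac{m\epsilon^2}{2\gamma^3}+O(m^2),
\]
which matches the asymptotic claimed by the theorem, as the $O(m)$ correction $\frac{m\epsilon^2}{2\gamma^3}$ is exactly the quantity one absorbs when comparing against the leading WKB prediction $-\gamma/m$.

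The real obstacle in this argument was the evaluation of $\Delta(0)$ and the establishment that it agrees with $1-\frac{m\pi\epsilon^2}{\gamma^3\omega}$ to order $m^2$, which was carried out in the Lemma via the infinite Hill determinant and the squeeze theorem on the series $S$. Given the Lemma, the theorem itself is essentially a bookkeeping exercise. The only subtlety worth remarking on is the interplay between the exponentially decaying $err$ from the derivation of (20)--(21) and the polynomial $O(m^2)$ Taylor remainder: both go to zero, but since $err$ decays faster than any power of $m$ as $m\to 0$, it is cleanly swallowed by the $O(m^2)$ notation.
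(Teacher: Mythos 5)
Your proposal is correct and follows exactly the paper's own (one-line) proof: Taylor-expand $\log(\Delta(0))$ using the Lemma and substitute the result into (20) and (21), absorbing the exponentially small $err$ into the $O(m^2)$ remainder. The one wrinkle---that the computation actually yields $\lambda_{\min}=-\frac{\gamma}{m}+\frac{m\epsilon^2}{2\gamma^3}+O(m^2)$, i.e.\ an $O(m)$ rather than $O(m^2)$ deviation from $-\gamma/m$, which you paper over as ``absorbed''---is inherited from the theorem statement itself and is not a defect introduced by your argument.
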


\begin{proof}
Taylor expand $log(\Delta(0))$ and use \ref{expmin} and \ref{expmax}.
\end{proof}

\section*{\centering {Periodic Parts}}

Set $\lambda=\lambda_{max}$. Since there exists a solution of \ref{dampedmathieu} in the form $P(t)e^{\lambda t}$, we plug this into \ref{dampedmathieu} and obtain


\begin{equation}
\label{period2}
{ m\ddot{P} +\gamma' \dot{P} + P\epsilon'  =0}.
\end{equation}

where $\gamma'=\gamma + 2m\lambda+, \ \epsilon'=  -\epsilon \cos(\omega t) + \gamma \lambda + m \lambda^2. $

We get two solutions of \ref{period2}, denoted (via an abuse of notation since the function $P_2(t)$ is not periodic as suggested. )  as $P_1(t)=P_{max}(t)$  and $P_2(t)=P_{min}(t)e^{(\lambda_{min} -\lambda_{max})t }$, where $P_{max}(t)$ is the periodic part that goes with $ e^{\lambda_{max}t}$ and similarly for $P_{min}(t)$ and $ e^{\lambda_{min}t}$.  By \cite{olver}, we can find an error bound on the solutions $P_1(t),  P_2(t)$ of \ref{period2} for the given parameters. For $\lambda_{max}(m) \in [a,b]$, we can obtain a supremum of these errors as a function of $m$. Consequently, our problem of bounding  $P_{max}(t)$ has been reduced to bounding $\lambda_{max}$. We apply this fact in the next theorem.

\begin{theorem}
	Denote $P_{max}(t)$ to be the periodic function that goes with $ e^{\lambda_{max}t}$ and similarly denote $P_{min}(t)$. Then $\lVert P_{max}(t)- \frac{ 1}{ \left( \frac{\gamma^2}{4}  + m \epsilon  \cos(\omega t) \right) ^{1/4} }e^{\frac{\sin(\omega t)}{\gamma \omega} }\rVert_{\infty}=O(\frac{m}{\omega})$ and \\ ${\lVert P_{min}(t)- \frac{ 1}{ \left( \frac{\gamma^2}{4}  + m \epsilon  \cos(\omega t) \right) ^{1/4} }e^{-\frac{\sin(\omega t)}{\gamma \omega} }\rVert_{\infty}=O(\frac{m}{\omega})}$. 
\end{theorem}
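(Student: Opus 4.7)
My plan is to run Olver's WKB analysis on equation (36) (the ODE for $P$) rather than on the original Mathieu equation, and to exploit the periodicity of $P$ so that the error only needs to be controlled on the single interval $[0, 2\pi/\omega]$. That restriction to one period is what buys a bound independent of $T$.

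First I would Liouville-transform (36) via $P(t) = v(t) e^{-\gamma' t/(2m)}$, which kills the first-order term. A short algebraic cancellation, using $\gamma' = \gamma + 2m\lambda$ and $\epsilon' = -\epsilon\cos(\omega t) + \gamma\lambda + m\lambda^2$, shows that all $\lambda$-dependent pieces disappear and the transformed equation is $m^2 \ddot{v} = f(t) v$ with exactly the same $f(t) = \gamma^2/4 + m\epsilon\cos(\omega t)$ that already appears in the introduction. So $v$ satisfies Olver's standard form with large parameter $u = 1/m$.

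Then I would apply Olver's error bound (4)--(6) on $[0, 2\pi/\omega]$. Since $f \to \gamma^2/4$ as $m \to 0$, the quantity $f^{-1/4}$ and its second derivative are bounded uniformly in $m$ on this interval, so the control function $F_j(u,t)$ is at most a constant times $2\pi/\omega$. This gives $|\epsilon_j| \leq e^{F_j/(2u)} - 1 = O(m/\omega)$. Inverting the Liouville transform yields
\[
P_{max}(t) = \frac{C_1}{f(t)^{1/4}} \exp\!\left( \tfrac{1}{m} \int_0^t \!\sqrt{f(s)}\,ds - \tfrac{\gamma' t}{2m} \right)\!\bigl(1 + O(m/\omega)\bigr).
\]
I would then Taylor expand $\sqrt{f}$ to second order in $m$ (the expansion is already written out in the introduction), integrate, and invoke Theorem 1 to write $\gamma' t/(2m) = \gamma t/(2m) + \lambda_{max} t$ with $\lambda_{max} = -m\epsilon^2/(2\gamma^3) + O(m^2)$. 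The $\gamma t/(2m)$ piece cancels and the $-m\epsilon^2 t/(2\gamma^3)$ piece likewise cancels, leaving the residual exponent
\[
\tfrac{\epsilon \sin(\omega t)}{\gamma \omega} \;-\; \tfrac{m \epsilon^2 \sin(2\omega t)}{4 \gamma^3 \omega} \;+\; O(m^2 t).
\]
On $[0, 2\pi/\omega]$ the last two summands are both $O(m/\omega)$, so exponentiating turns them into a multiplicative factor $1 + O(m/\omega)$. Normalising $C_1 = 1$ (permitted because $P_{max}$ is only determined up to an overall constant) yields the claimed $L^\infty$ bound. The $P_{min}$ statement follows by the same calculation using the $-u\int\sqrt{f}$ branch together with Theorem 1's asymptotic for $\lambda_{min}$.

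The principal obstacle is the linear-in-$t$ cancellation: the $O(m^2)$ absolute error in $\lambda_{max}$ from Theorem 1 contributes $O(m^2) \cdot t$ to the exponent, which would be unbounded on $[0, T]$ for large $T$ but becomes $O(m^2/\omega) = O(m/\omega) \cdot O(m)$ once we restrict to one period. Thus the whole argument depends critically on working on $[0, 2\pi/\omega]$ rather than on the full interval, exactly as suggested by the paragraph preceding the theorem. Secondary points of care are verifying that $F_j$ is genuinely uniform in $m$ (the $m$-dependence hides inside $f$) and fixing the Olver normalization constant to agree with the stated template.
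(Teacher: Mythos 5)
Your proposal is correct and follows essentially the same route as the paper: Liouville-transform the equation for $P$, apply Olver's error bounds, use periodicity to restrict to $[0,2\pi/\omega]$, and Taylor-expand the exponent using Theorem 1's asymptotics for $\lambda_{max}$. Your explicit observation that $\tfrac{\gamma'^2}{4}-m\epsilon'$ collapses exactly to $\tfrac{\gamma^2}{4}+m\epsilon\cos(\omega t)$ is a tidy simplification the paper leaves implicit (it carries $\gamma'$ and $\epsilon'$ through the formulas), but it does not change the argument.
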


\begin{proof}

We show this as follows. Use the transformation $P(t)=v(t)e^{-\frac{\gamma' t}{2m}}$ on \ref{period2}: 

\begin{equation}
  m^2 \ddot{v} -\left( \frac{\gamma'^2}{4}  + m \epsilon'  \right) v=0.
\end{equation}
 
 From earlier, the asymptotic solutions to the above equation are
 
 \begin{equation}
 v_i(t)=\tilde{v_i(t)}+\tilde{v_i(t)}\delta_i(t),\  i=1,2
 \end{equation}
 
 where
 
 \begin{align}
\tilde{v_1(t)} &= \frac{ 1}{ \left( \frac{\gamma'^2}{4}  + m \epsilon'   \right) ^{1/4} } e^{  \frac{1}{m}  \int_{t_0}^{t} { \sqrt{\left( \frac{\gamma'^2}{4}  + m \epsilon'  \right) }ds  } } \\
\tilde{v_2(t)} &=
 \frac{ 1}{ \left( \frac{\gamma'^2}{4}  + m \epsilon'   \right) ^{1/4} } e^{ -  \frac{1}{m}  \int_{t_0}^{t} { \sqrt{\left( \frac{\gamma'^2}{4}  + m \epsilon'   \right) }ds  } } .
 \end{align}

\begin{equation}
|\delta_i(t)| \lessapprox e^{\frac{5 \epsilon \omega m^2 t}{2 \gamma^3}}-1.
\end{equation}

Keeping in mind the Liouville transformation, the two linearly independent solutions of \ref{period2} are thus

\begin{align}
P_1(t) &=\frac{ 1}{ \left( \frac{\gamma'^2}{4}  + m \epsilon'  \right) ^{1/4} } e^{  \frac{1}{m}  \int_{t_0}^{t} { \sqrt{\left( \frac{\gamma'^2}{4}  + m \epsilon'   \right) }ds  } -\frac{\gamma' t}{2m} } \left( 1+ \delta_1(t)\right) \\
P_2(t) &=\frac{ 1}{ \left( \frac{\gamma'^2}{4}  + m \epsilon'   \right) ^{1/4} } e^{-  \frac{1}{m}  \int_{t_0}^{t} { \sqrt{\left( \frac{\gamma'^2}{4}  + m \epsilon'   \right) }ds  } -\frac{\gamma' t}{2m} } \left( 1+ \delta_2(t)\right).
\end{align}

Let $\lambda= \lambda_{max}$.  Then it is clear that one of $P_i(t)$ is equal to $P_{max}(t)$, and the other $P_i(t)$ is equal to $P_{min}(t)e^{(\lambda_{min} -\lambda_{max})t } $. By inspection, $P_1(t)=P_{max}(t) $.  As stated before, the WKB method suggests $P_{max} \approx \left(\frac{\gamma^2}{4}\right)^{-\frac{1}{4}} e^{\frac{\sin(\omega t)}{\gamma \omega} }$. We verify the accuracy of this approximation by bounding 

\begin{equation}
\label{periodb1}
\lVert P_{1}(t)-   \left(\frac{\gamma^2}{4} \right) ^{-1/4}  e^{\frac{\sin(\omega t)}{\gamma \omega} }\rVert_{\infty}.
\end{equation}

Since the periodic functions have period $\frac{2 \pi}{\omega}$, \ref{periodb1} is equivalent to bounding

\begin{equation}
\label{periodb2}
\lVert P_{1}(t)-   \left(\frac{\gamma^2}{4} \right) ^{-1/4}  e^{\frac{\sin(\omega t)}{\gamma \omega} }\rVert_{t \in \left[0 \ \frac{2 \pi}{\omega}\right] }.
\end{equation}

Doing a Taylor expansion of $P_1(t)$, and using the fact that $|\lambda_{max}|=O(m)$, we find that 

\begin{equation}
\lVert P_{1}(t)-  \frac{ 1}{ \left( \frac{\gamma^2}{4}  + m \epsilon  \cos(\omega t) \right) ^{1/4} } e^{\frac{\sin(\omega t)}{\gamma \omega} }\rVert_{t \in \left[0 \ \frac{2 \pi}{\omega} \right] }=O\left(\frac{m}{\omega}\right).
\end{equation}

\end{proof}
\textbf{remark:}
A similar procedure applies to bounding $P_{min}(t)$.

\section*{\centering {\small{Acknowledgements}}}
I would like to thank Janek Wehr for fruitful discussions. This work was partially supported by NSF grant DMS 1615045.

\begin{center}
\small{\bibliography{convergencebib}}
\end{center}	
\bibliographystyle{plain}		
\nocite{*}

\end{document}